\def\bbbf{\mathbb F}
\def\bbbz{\mathbb Z}
\def\bbbp{\mathbb P}
\def\leg#1#2{\left({#1\over#2}\right)}
\def\is{\equiv}
\def\mod#1{({\rm mod}\ #1)}
\def\tr{{\rm Tr}}
\newtheorem{theorem}[subsection]{Theorem}
\newtheorem{lemma}[subsection]{Lemma}
\newtheorem{corollary}[subsection]{Corollary}
\newtheorem{conjecture}[subsection]{Conjecture}
\begin{document}
\title[]{Some supercongruences of arbitary length}

\author[Frits Beukers]{Frits Beukers}
\address{Utrecht University, Department of Mathematics, P.O. Box 80.010,
3508 TA Utrecht, Netherlands}
\email{f.beukers@uu.nl}
\urladdr{http://www.staff.science.uu.nl/{\textasciitilde}beuke106/}

\author[Eric Delaygue]{Eric Delaygue}
\address{Univ Lyon,, Universit\'e Claude Bernard Lyon 1, CNRS UMR 5208, Institut
Camille Jordan, F-69622 Villeurbanne, France}
\email{delaygue@math.univ-lyon1.fr}
\urladdr{http://math.univ-lyon1.fr/{\textasciitilde}delaygue/}

\thanks{The authors would like to thank the Matrix Institute and the organizers
of the workshop Hypergeometric Motives and Calabi-Yau Differential Equations for
the wonderful and stimulating environment in which this research arose.
This project has received funding from the European Research Council (ERC) under the European Union's Horizon 2020 research and innovation programme 
under the Grant Agreement No 648132.}

\date{}

\subjclass[2010]{11F33, 11T24}

\keywords{}

\begin{abstract}
We prove supercongruences modulo $p^2$ for values of truncated hypergeometric
series at some special points. The parameters of the hypergeometric series are $d$
copies of $1/2$ and $d$ copies of $1$ for any integer $d\ge2$. In addition we
describe their relation to hypergeometric motives.
\end{abstract}

\maketitle

\section{Introduction}
Fix an integer $d\ge2$ and consider the hypergeometric series
$$F(z)=\sum_{n=0}^{\infty} \left({(1/2)_n\over n!}\right)^d z^n,$$
where $(x)_n$ denotes the product $x(x+1)(x+2)\cdots(x+n-1)$. It is known as the
Pochhammer symbol. Let $p$ be a fixed odd prime. For every integer $s\ge0$ we define
the truncated series
$$
F_{p^s}(z)=\sum_{n=0}^{p^s-1} \left({(1/2)_n\over n!}\right)^d z^n.
$$
In particular $F_1(z)=1$.
Let $z_0$ be a $p$-adic unit and suppose that $F_p(z_0)$ is also a $p$-adic
unit. Then, by a result of Dwork \cite{DworkIV}, we have for all $s\ge1$ 
that $F_{p^s}(z_0)$ is a $p$-adic unit together with the congruence
\begin{equation}\label{cauchy}
{F_{p^{s+1}}(z_0)\over F_{p^s}(z_0)}\is {F_{p^s}(z_0)\over F_{p^{s-1}}(z_0)}\mod{p^s}.
\end{equation}
So the sequence of quotients is a $p$-adic Cauchy sequence. We define the limit
$$f(z_0)=\lim_{s\to\infty}{F_{p^s}(z_0)\over F_{p^{s-1}}(z_0)}.$$
The number $f(z_0)$ is refered to as the {\it unit root part}
of the Frobenius-action on a suitable $p$-adic cohomology. We shall make this a
bit more explicit in Section \ref{motive}.

From (\ref{cauchy}) it follows that $f(z_0)\is F_p(z_0)\mod{p}$. But it turns out that
for some values of
$z_0$ one has stronger congruences, a remarkable phenomenon called {\it supercongruences}.
In this paper we prove the following theorem,

\begin{theorem}\label{main}
Let $\epsilon_p=(-1)^{d(p-1)/2}$ and suppose that $F_p(\epsilon_p)$ is a $p$-adic unit.
Then
$$F_p(\epsilon_p)\is f(\epsilon_p)\mod{p^2}.$$
\end{theorem}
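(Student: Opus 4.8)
The plan is to turn Theorem \tref{main} into a single finite congruence using Dwork's relation \fref{cauchy}. Put $q_s=F_{p^s}(\epsilon_p)/F_{p^{s-1}}(\epsilon_p)$; since all the $F_{p^s}(\epsilon_p)$ are $p$-adic units, the congruences $q_{s+1}\is q_s\mod{p^s}$ telescope to $f(\epsilon_p)\is q_2\mod{p^2}$, where $q_2=F_{p^2}(\epsilon_p)/F_p(\epsilon_p)$. As $F_1=1$ gives $q_1=F_p(\epsilon_p)$, the asserted congruence $f(\epsilon_p)\is F_p(\epsilon_p)\mod{p^2}$ is, after clearing the unit $F_p(\epsilon_p)$, \emph{equivalent} to
$$F_{p^2}(\epsilon_p)\is F_p(\epsilon_p)^2\mod{p^2}.$$
From here on I work only with the explicit truncated sums.

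Write $A(n)=(1/2)_n/n!=\binom{2n}{n}/4^n$, so the coefficient of $z^n$ is $c_n=A(n)^d$. The essential leverage is the hypothesis $d\ge2$: by Kummer's theorem $v_p\binom{2n}{n}$ is the number of carries in the base-$p$ addition $n+n$, so $c_n=A(n)^d\is0\mod{p^2}$ unless every base-$p$ digit of $n$ is at most $(p-1)/2$. Splitting $n=pk+r$ with $0\le k,r\le p-1$, only indices with $k,r\le(p-1)/2$ survive modulo $p^2$. Using $\epsilon_p^p=\epsilon_p$ (as $\epsilon_p=\pm1$ and $p$ is odd) I rewrite $F_p(\epsilon_p)^2=F_p(\epsilon_p)F_p(\epsilon_p^p)=\sum_{k,r}c_kc_r\epsilon_p^{pk+r}$, whose surviving terms match those of $F_{p^2}(\epsilon_p)=\sum_{k,r}c_{pk+r}\epsilon_p^{pk+r}$ index by index. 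The claim becomes
$$\sum_{k=0}^{(p-1)/2}\sum_{r=0}^{(p-1)/2}\bigl(c_{pk+r}-c_kc_r\bigr)\epsilon_p^{k+r}\is0\mod{p^2}.$$

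For $k,r\le(p-1)/2$ a direct manipulation of Pochhammer symbols gives the exact factorisation $A(pk+r)=A(r)A(k)\,(N/D)$, where the $k$ factors divisible by $p$ in $(1/2+r)_{pk}/(1+r)_{pk}$ cancel between numerator and denominator and $N,D$ collect the factors prime to $p$. A Wilson/residue count shows $N/D\is1\mod p$, so $c_{pk+r}=c_kc_r(N/D)^d$ with $(N/D)^d=1+dp\,W(k,r)+O(p^2)$. Hence the displayed sum equals $dp\,S+O(p^2)$ with $S=\sum_{k,r}c_kc_r\,W(k,r)\,\epsilon_p^{k+r}$, and it suffices to prove $S\is0\mod p$. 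I would compute $W(k,r)\bmod p$ either by expanding each residue block to first order in $p$ — producing the Fermat quotient $q_p(2)=(2^{p-1}-1)/p$ together with harmonic sums $\sum 1/(2r+2i+1)$ and $\sum 1/(r+1+i)$ — or, more cleanly, via the Taylor expansion $\Gamma_p(x+pt)\is\Gamma_p(x)(1+pt\,\psi_p(x))\mod{p^2}$ of the $p$-adic Gamma function. In either form $W(k,r)$ is a sum of products of a function of $k$ and a function of $r$, so $S$ factors into a short combination of one-variable moments $\sum_k k^a c_k\epsilon_p^k$ weighted against $r$-sums carrying the digamma/harmonic data.

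The main obstacle is the final cancellation: showing that this combination of moments vanishes modulo $p$ \emph{precisely} at $\epsilon_p=\leg{-1}{p}^d$. Here I would use the reflection congruence $A(r)\is(-1)^r\binom{(p-1)/2}{r}\mod p$, which both explains the cutoff at $(p-1)/2$ and recasts the moments as sums of $\binom{(p-1)/2}{r}^d$ against the weight $\epsilon_p^r$; the identity $\epsilon_p^r(-1)^{rd}=(-1)^{rd(p+1)/2}$ is exactly what singles out this value of $z_0$. Combined with Wolstenholme-type congruences for the harmonic sums and the symmetry $r\leftrightarrow(p-1)/2-r$ of the binomial coefficients, I expect the $q_p(2)$-term and the digamma terms to cancel in pairs, giving $S\is0\mod p$. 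Verifying that all these first-order corrections conspire to vanish at $z_0=\epsilon_p$ — and only there — is the delicate heart of the argument.
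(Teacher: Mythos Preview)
Your strategy is exactly the paper's: reduce via \fref{cauchy} to $F_{p^2}(\epsilon_p)\is F_p(\epsilon_p)^2\mod{p^2}$, split $n=pk+r$, use $d\ge2$ to kill all terms with a base-$p$ digit exceeding $(p-1)/2$, expand $A(pk+r)/A(k)A(r)$ to first order in $p$, and then appeal to the reflection $r\mapsto(p-1)/2-r$. The gap is that you stop at ``I expect'': the actual first-order cancellation --- which you correctly identify as the heart of the argument --- is never carried out, and your description of $W(k,r)$ via digamma terms is left too vague to see why the pieces match up.

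The paper's execution of this step is short and entirely elementary; no $\Gamma_p$ is needed. Working with ${2n\choose n}=4^nA(n)$ rather than the Pochhammer ratio, Babbage's congruence ${2pk\choose pk}\is{2k\choose k}\mod{p^2}$ absorbs the $p$-multiples in one stroke, and expanding the remaining factors gives the explicit closed form
$$A(pk+r)\is A(k)A(r)\Bigl(1-\gamma pk+2pk\sum_{j=1}^{2r}\frac{(-1)^{j-1}}{j}\Bigr)\mod{p^2},\qquad \gamma=\frac{4^{p-1}-1}{p}.$$
Thus $W(k,r)=k\bigl(-\gamma+2\sum_{j=1}^{2r}(-1)^{j-1}/j\bigr)$ separates at once, and your double sum $S$ factors as $\bigl(\sum_k kc_k\epsilon_p^k\bigr)\bigl(G_1(\epsilon_p)-\gamma F_p(\epsilon_p)\bigr)$ with $G_1(z)=2\sum_r c_r z^r\sum_{j=1}^{2r}(-1)^{j-1}/j$. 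Now apply the reflection you anticipated: using $\alpha_{(p-1)/2-r}\is(-1)^{(p-1)/2}\alpha_r\mod{p}$ and the substitution $j\mapsto p-j$, the partial sum $\sum_{j=1}^{2r}$ becomes $\sum_{j=2r+1}^{p-1}$; averaging the two expressions for $G_1(\epsilon_p)$ yields the full sum $\sum_{j=1}^{p-1}(-1)^{j-1}/j\is\gamma\mod p$ (Eisenstein), whence $G_1(\epsilon_p)\is\gamma F_p(\epsilon_p)\mod p$ and $S\is0\mod p$. The special value $z=\epsilon_p$ enters precisely here: it is what makes the sign $(-1)^{d(p-1)/2}\epsilon_p^{(p-1)/2-r}$ from the reflection collapse back to $\epsilon_p^r$.
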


This proves part of the following conjecture we like to propose here.

\begin{conjecture}\label{conj1}
With the notations as above let $\epsilon=\pm1$ and suppose that $F_p(\epsilon)$
is a $p$-adic unit. Then $F_p(\epsilon)\is f_p(\epsilon)\mod{p^2}$
\end{conjecture}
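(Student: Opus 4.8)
The plan is to first strip away the transcendental unit root via the Dwork congruence and then prove a purely finite supercongruence, for which the hypothesis $d\ge2$ does most of the work. Set $q_s=F_{p^s}(z_0)/F_{p^{s-1}}(z_0)$ with $F_{p^0}=F_1=1$; then \eqref{cauchy} says $q_{s+1}\equiv q_s\pmod{p^s}$, so $f(z_0)-q_2=\sum_{s\ge2}(q_{s+1}-q_s)$ is a sum of terms each divisible by $p^s$ with $s\ge2$, hence $f(z_0)\equiv q_2=F_{p^2}(z_0)/F_p(z_0)\pmod{p^2}$ for every admissible $z_0$. Since $F_p(\epsilon_p)$ is a unit, Theorem~\ref{main} is equivalent to the finite statement
$$F_{p^2}(\epsilon_p)\equiv F_p(\epsilon_p)^2\pmod{p^2}.$$

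Next I would split the range and use $d\ge2$. Write $B_n=(1/2)_n/n!=\binom{2n}{n}/4^n$, so the coefficients are $B_n^d$, and set $n=ap+b$ with $0\le a,b\le p-1$. Since $\epsilon_p=\pm1$ and $p$ is odd we have $\epsilon_p^{ap+b}=\epsilon_p^{a+b}$, so $F_{p^2}(\epsilon_p)=\sum_{a,b}B_{ap+b}^d\epsilon_p^{a+b}$ while $F_p(\epsilon_p)^2=\sum_{a,b}B_a^dB_b^d\epsilon_p^{a+b}$. By Kummer's theorem $v_p(B_n)$ is the number of carries in the base-$p$ addition $n+n$; a short check shows that as soon as $a>(p-1)/2$ or $b>(p-1)/2$ one has $v_p(B_{ap+b})\ge1$ and $v_p(B_aB_b)\ge1$ simultaneously. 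Because $d\ge2$, every such term is $\equiv0\pmod{p^2}$ on both sides. Hence, modulo $p^2$, both sums collapse to the square $0\le a,b\le(p-1)/2$, on which all the $B$'s are $p$-adic units, and it remains to prove
$$\sum_{0\le a,b\le(p-1)/2}\bigl(B_{ap+b}^d-B_a^dB_b^d\bigr)\epsilon_p^{a+b}\equiv0\pmod{p^2}.$$

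To analyse the coefficients on this square I would isolate the $p$-divisible factors of the two Pochhammer products. The entries $j+1/2$ with $j\equiv(p-1)/2\pmod p$, and the multiples of $p$ in $(ap+b)!$, contribute exactly $p^a(1/2)_a$ and $p^aa!$ respectively, giving $B_{ap+b}=B_a\cdot(U/V)$ where $U$ and $V$ are the products of the remaining unit factors. Evaluating each complete block of length $p$ by Wilson's theorem yields $U/V\equiv B_b\pmod p$, which recovers $B_{ap+b}\equiv B_aB_b\pmod p$; carrying the block products to the next order, using the Wolstenholme-type harmonic congruences, refines this to $B_{ap+b}\equiv B_aB_b(1+p\,\gamma_{a,b})\pmod{p^2}$ for an explicit $\gamma_{a,b}\in\mathbb{Z}_p$ assembled from such sums. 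Raising to the $d$-th power gives $B_{ap+b}^d-B_a^dB_b^d\equiv d\,p\,B_a^dB_b^d\gamma_{a,b}\pmod{p^2}$, so the whole theorem comes down to the congruence $\sum_{a,b}B_a^dB_b^d\gamma_{a,b}\epsilon_p^{a+b}\equiv0\pmod p$.

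The main obstacle is precisely this last cancellation, and it is where the value $\epsilon_p=(-1)^{d(p-1)/2}$ is forced. I expect to prove it by exhibiting a sign-reversing involution on the safe square, such as $a\mapsto(p-1)/2-a$ together with the analogous reflection in $b$: under it the unit $B_a^d$ is multiplied by a power of $(-1)^{(p-1)/2}$ coming from the half-integer form of Wilson's theorem, $\left(\left(\tfrac{p-1}{2}\right)!\right)^2\equiv(-1)^{(p+1)/2}\pmod p$, while the harmonic term $\gamma_{a,b}$ reverses sign. With the weight $\epsilon_p^{a+b}=(-1)^{d(p-1)(a+b)/2}$ these two effects conspire to pair the summands into cancelling couples, forcing the sum to be its own negative and hence $\equiv0\pmod p$ since $p$ is odd. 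Checking that $\gamma_{a,b}$ transforms exactly as required under the involution, and that the weight $(-1)^{d(p-1)/2}$ and no other sign produces total cancellation, is the delicate part; by contrast the reduction and the factorisation above are comparatively routine.
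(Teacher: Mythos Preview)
The statement you are asked to prove is Conjecture~\ref{conj1}, which the paper does \emph{not} prove in full; it is stated as a conjecture, and the paper only establishes the case $\epsilon=\epsilon_p=(-1)^{d(p-1)/2}$ as Theorem~\ref{main}. Your proposal explicitly specialises to $\epsilon_p$ (and you even remark that ``the weight $(-1)^{d(p-1)/2}$ and no other sign produces total cancellation''), so what you have is a proof plan for Theorem~\ref{main}, not for the full conjecture. The remaining case $\epsilon=-\epsilon_p$ with $p\equiv1\pmod4$ is left open in the paper as well, so in scope your proposal matches exactly what the paper achieves and no more.

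For the $\epsilon_p$ case your approach is essentially the paper's proof of Theorem~\ref{main}. Your factorisation $B_{ap+b}\equiv B_aB_b(1+p\gamma_{a,b})\pmod{p^2}$ is precisely Lemma~\ref{split}, which gives $\gamma_{a,b}=a\bigl(-\gamma+2\sum_{k=1}^{2b}(-1)^{k-1}/k\bigr)$ with $\gamma=(4^{p-1}-1)/p$; the discarding of terms with $a$ or $b>(p-1)/2$ using $d\ge2$ is the same; and the cancellation via the reflection $b\mapsto(p-1)/2-b$ combined with $\alpha_{(p-1)/2-b}\equiv(-1)^{(p-1)/2}\alpha_b$ (Lemma~\ref{symmetry}) and Eisenstein's congruence $\sum_{k=1}^{p-1}(-1)^{k-1}/k\equiv\gamma$ (Lemma~\ref{eisenstein}) is exactly what the paper does. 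One small correction: because $\gamma_{a,b}$ factors as $a\cdot h(b)$, the double sum splits as $\bigl(\sum_a aB_a^d\epsilon_p^a\bigr)\bigl(\sum_b B_b^d h(b)\epsilon_p^b\bigr)$ and only the $b$-reflection is used to kill the second factor; your proposed simultaneous reflection in both $a$ and $b$ does not send $\gamma_{a,b}$ to $-\gamma_{a,b}$, since the $a$-part becomes $(p-1)/2-a$ rather than $-a$. With that adjustment your plan goes through and coincides with the paper's argument.
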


It should be remarked that if $p\is3\mod{4}$ then $F_p(-\epsilon_p)\is0\mod{p}$
by Corollary \ref{Fzero}. So the only values $F_p(\epsilon)$ which are still conjectural
are $F_p(-1)$ with $p\is1\mod{4}$.

For some choices of $d,\epsilon$ we conjecture some stronger congruences.

\begin{conjecture}\label{conj2}
Suppose that $F_p(\epsilon)$ is a $p$-adic unit. Then we have $F_p(\epsilon)
\is f_p(\epsilon)\mod{p^3}$ in the following cases: $d=3$ and $\epsilon=\pm1$,
$d=4$ and $\epsilon=1$, $d=5$ and $\epsilon=1$, $d=6$ and $\epsilon=1$.

Moreover, in the latter case we expect $F_p(1)\is f_p(1)\mod{p^5}$.  
\end{conjecture}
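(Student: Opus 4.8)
The plan is to use, for each listed pair $(d,\epsilon)$, the extra structure that the rank-$d$ hypergeometric motive of $F$ (which has weight $d-1$) acquires when specialized to the singular value $z=\epsilon=\pm1$, and to convert the supercongruence into a congruence between $F_p(\epsilon)$ and a single Frobenius trace. The starting observation is that $F$ admits symmetric-power and degeneration descriptions at these points. For $d=3$, Clausen's identity gives ${}_3F_2(1/2,1/2,1/2;1,1;z)={}_2F_1(1/4,1/4;1;z)^2$, so the motive is the symmetric square of a rank-two hypergeometric motive which acquires complex multiplication at $z=\pm1$. For $d=4$ the fiber at $z=1$ carries the rank-two piece of a rigid Calabi--Yau threefold, known to be modular of weight four; for $d=5,6$ the fibers at $z=1$ carry the analogous higher Calabi--Yau pieces, expected to be modular of weights five and six. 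In each case the unit root $f_p(\epsilon)$ of Section \ref{motive} is, by Dwork's construction, the slope-zero Frobenius eigenvalue of the distinguished rank-two (or CM) sub-quotient, and I would first pin down that identification precisely.

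Granting it, the mod $p^3$ statements follow from two ingredients. Write the two eigenvalues of the relevant Euler factor as $f_p(\epsilon)$ and $p^{w}/f_p(\epsilon)$ with $v_p(f_p(\epsilon))=0$, where $w$ is the motivic weight of the piece, and set the trace $a_p(\epsilon)=f_p(\epsilon)+p^{w}/f_p(\epsilon)$. Then $f_p(\epsilon)\is a_p(\epsilon)\mod{p^{w}}$ for free, the second root having valuation $w$. For $d=4$ one has $w=3$ and $a_p$ is the coefficient of the weight-four newform, so this already yields $f_p(1)\is a_p(1)\mod{p^3}$; the remaining task is the matching $F_p(1)\is a_p(1)\mod{p^3}$, which I would obtain by rewriting $\bigl((1/2)_n/n!\bigr)^4$ through $p$-adic Gamma functions and collapsing the resulting Gauss sums via Gross--Koblitz. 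For $d=3$, $\epsilon=\pm1$ the weight is only $w=2$, but the complex multiplication supplies the extra order: the intermediate-slope eigenvalue of the symmetric square is forced by the CM Hecke character, and its contribution to $F_p(\epsilon)$ cancels modulo $p^3$, which is exactly the CM-enhancement distinguishing $\epsilon=\pm1$ here. For $d=5$, $\epsilon=1$ the target sits in a weight-four motive with intervening Tate/Hodge classes, and I would reduce to its slope-zero part after subtracting the computable lower-slope contributions, expecting the matching to hold to order $p^3$.

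The case $d=6$, $\epsilon=1$ is the deepest. The fiber at $z=1$ should carry a rank-two piece attached to a weight-six newform, of motivic weight $w=5$, so the trace relation gives $f_p(1)\is a_p(1)\mod{p^5}$ at once, and the whole congruence $F_p(1)\is f_p(1)\mod{p^5}$ (which subsumes the $\mod{p^3}$ entry) is reduced to the single matching statement $F_p(1)\is a_p(1)\mod{p^5}$. I would attack this by telescoping the defect as $F_p(1)-f_p(1)=\sum_{s\ge1}\bigl(R_s-R_{s+1}\bigr)$ with $R_s=F_{p^{s+1}}(1)/F_{p^s}(1)$, each summand divisible by $p^s$ through \fref{cauchy}, and showing that the self-duality (functional equation) of the weight-six piece forces the coefficients of $p,p^2,p^3,p^4$ to vanish in turn.

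The main obstacle, in every case and acutely for $d=6$, is this high-precision matching of the truncated sum to the Hecke trace. It amounts to upgrading Dwork's congruence \fref{cauchy} well beyond the mod-$p^2$ input behind Theorem \ref{main}, and that requires controlling the cross-terms between the unit root and all intervening non-unit slopes of the Frobenius to the stated order; I know no uniform mechanism that forces these cancellations. A case-by-case proof through an explicit Wilf--Zeilberger certificate for $F_{p^s}(\epsilon)$, matched against the Gross--Koblitz evaluation of $f_p(\epsilon)$, is the most promising route, and producing such a certificate to order $p^5$ for $d=6$ is the hard part---which is why these statements are offered only as a conjecture.
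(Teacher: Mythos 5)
This statement is offered in the paper as a conjecture, and the paper contains no proof of it; what it does contain is essentially the heuristic you have reconstructed. Section \ref{motive} records the expected factorizations of $Z_p(1,T)$ --- namely $1-a_pT+p^3T^2$ for $d=4$ with Newton slopes $0,3$, and $(1-pa_pT+p^5T^2)(1-b_pT+p^5T^2)$ for $d=6$ with the unit root in the second factor --- from which the trace relations $f_p(1)\is a_p\mod{p^3}$ and $f_p(1)\is b_p\mod{p^5}$ follow exactly as in your second paragraph, and the introduction cites Kilbourn ($d=4$: $F_p(1)\is a_p\mod{p^3}$) and Osburn--Straub--Zudilin ($d=6$: $F_p(1)\is b_p\mod{p^3}$). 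But the paper is explicit that the pivotal identification of $f_p(1)$ with the unit root of the modular Euler factor is itself unproven (``We cannot prove this''), and you likewise only promise to ``pin down that identification precisely''. Since you concede at the end that both this identification and the high-precision matching $F_p(\epsilon)\is a_p(\epsilon)$ remain open, your text is a research program rather than a proof; it reproduces the paper's own evidence without closing the conjecture, which is indeed its honest status.

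Beyond incompleteness, two of your reductions would fail as stated. For $d=3$ the motivic weight is $w=2$, so the non-unit Frobenius eigenvalue has valuation exactly $2$ and $f_p(\epsilon)\is c_p\mod{p^2}$ and no better; the paper explicitly notes that numerically $F_p(1)\is c_p$ \emph{fails} modulo $p^3$ while $F_p(1)\is f_p(1)$ holds. Hence any route through a trace congruence modulo $p^3$ is provably blocked in this case: one must access the unit root itself beyond the trace (say via its explicit CM value, $c_p=2(a^2-b^2)$ with $p=a^2+b^2$, and a Gross--Koblitz evaluation), and your claim that the CM ``contribution to $F_p(\epsilon)$ cancels modulo $p^3$'' is an assertion of the conjecture, not a mechanism. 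Similarly for $d=6$: with $R_s=F_{p^{s+1}}(1)/F_{p^s}(1)$, congruence \fref{cauchy} gives only $p^{s+1}\mid R_s-R_{s+1}$, so the telescoped defect $F_p(1)-f_p(1)=\sum_{s\ge0}(R_s-R_{s+1})$ is a priori divisible only by $p$ (by $p^2$ after Theorem \ref{main}); saying that self-duality ``forces the coefficients of $p,\ldots,p^4$ to vanish in turn'' is again a restatement of what is to be proved. Finally, for $d=5$, $\epsilon=1$ you have no candidate trace congruence to match against at all, and none is cited in the paper.
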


There are a number of results which go into this direction, although the formulation
does not contain the unit root $f_p(\epsilon)$ but an integer number, usually the
$p$-th coefficient of an $L$-series that occurs in number theory. For example, when $d=2$
Mortenson \cite{Mor1} showed that $F_p(1)\is \leg{-4}{p}\mod{p^2}$. Presumably we
have $f_p(1)=\leg{-4}{p}$. In general we expect that $f_p(\epsilon)$ is a zero of
the $p$-th factor of the L-series associated to the underlying hypergeometric motive. We explain this more in detail in section \ref{motive}

In the case $d=3$ several authors (Ishikawa, Van Hamme, Ahlgren)
independently proved that 
$$F_p(1)\is c_p\mod{p^2}$$
where $c_p$ is the $p$-th coefficient of 
$\eta(4\tau)^6\in S_3(16,\chi(-4))$, see \cite[p322]{Mor2} and the references
therein. The notation $S_k(N,\chi)$ stands for
the modular cusp forms of weight $k$ with group $\Gamma_0(N)$ and character $\chi$.
In particular $\chi(a)$ stands for the Legendre symbol $\leg{a}{.}$.
It is a CM form given by $c_p=2(a^2-b^2)$ where $p=a^2+b^2$ with $a$ odd.
For a proof we refer to \cite[Thm 4]{Mor2}. Numerical experiment shows that these
congruences do not hold modulo $p^3$. Surprisingly enough, these experiments also suggest
that $F_p(1)\is f_p(1)\mod{p^3}$. Presumably $f_p(1)$ is the unit root of
$x^2-c_px+p^2$ corresponding to the local Euler factor of the $L$-series of
the modular form. 

Kilbourn \cite{Kil} has shown that when $d=4$ we have
$$F_p(1)\is a_p\mod{p^3},$$
where $a_p$ is the coefficient of the
modular form $\eta(2\tau)^4\eta(4\tau)^4=\sum_{n\ge1}a_nq^n,\ q=e^{2\pi i\tau}$
in $S_4(8,\chi_0)$. By $\chi_0$ we denote the trivial character. Presumably
$f_p(1)$ is the $p$-adic unit root of $x^2-a_px+p^3$ corresponding to the
local Eulerfactor at $p$ of the L-series of the modular form.
We cannot prove this, but if true it implies that $f_p(1)\is a_p\mod{p^3}$.

Recently Osburn, Straub, Zudilin \cite{OSZ} proved that $F_p(1)\is b_p\mod{p^3},$
where $b_p$ is the $p$-th coefficient of the unique newform in $S_6(8,\chi_0)$.
It is conjectured that this congruence holds modulo $p^5$ for all odd $p$. 
We believe that $f_p(1)$ is the $p$-adic unit zero of $x^2-b_px+p^5$. 
Similarly as before this would imply that $f_p(1)\is b_p\mod{p^5}$.

Beside these results we
like to record the following conjecture. 

\begin{conjecture}\label{conj3}
We make the implicit assumption that $F_p(-1)$ is a $p$-adic unit.

When $d=3$ we expect $F_p(-1)\is c_p\mod{p^2}$ where $c_p$ is the $p$-th
coefficient of $\eta(\tau)^2\eta(2\tau)\eta(4\tau)\eta(8\tau)^2\in S_3(8,\chi(-8))$.
It is a CM-form with coefficients given by $2(2b^2-a^2)$ where $p=a^2+2b^2$ in case $p\is1,3
\mod{8}$. As conjectured in Conjecture \ref{conj2} we also expect that
$F_p(-1)\is f_p(-1)\mod{p^3}$.

When $d=5$ we expect $F_p(-1)\is d_p\mod{p^2}$ where $d_p=\left({-8\over p}\right)
(\delta_p^2-2p^2)$ and $\delta_p$ is the $p$-th coefficient of the form $g\in S_3(256,\chi(-4))$
whose expansion starts with
\begin{eqnarray*}
g(\tau)&=&q-2\sqrt{-2}q^3+4q^5+8\sqrt{-2}q^7+q^9+10\sqrt{-2}q^{11}+20q^{13}-8\sqrt{-2}q^{15}\\
&&-10q^{17}-10\sqrt{-2}q^{19}+32q^{21}-8\sqrt{-2}q^{23}+9q^{25}-20\sqrt{-2}q^{27}+20q^{29}+\cdots
\end{eqnarray*}
Since $f_p(-1)$ is (presumably) a zero of $x^2-d_px+p^4$ we should have $f_p(-1)\is d_p\mod{p^4}$.
However, experiment shows that $F_p(-1)\is f_p(-1)$ only holds modulo $p^2$. 
We are indebted to
Wadim Zudilin and Dave Roberts for the (conjectural) identification of the coefficients $d_p$.
\end{conjecture}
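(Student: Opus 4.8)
The plan is to separate each congruence into a \emph{supercongruence} $F_p(-1)\is f_p(-1)\mod{p^2}$ and a \emph{modular identification} $f_p(-1)\is c_p\mod{p^2}$ (and likewise with $d_p$). For the supercongruence, note that for odd $d$ one has $\epsilon_p=(-1)^{d(p-1)/2}=(-1)^{(p-1)/2}$, so $\epsilon_p=-1$ precisely when $p\is3\mod4$; for such $p$ the supercongruence is exactly Theorem \ref{main} with $\epsilon_p=-1$, while for $p\is1\mod4$ it is the still-open $\epsilon=-1$ case of Conjecture \ref{conj1}. (For primes inert in the relevant CM field one has $F_p(-1)\is0\mod p$, so these are excluded by the unit hypothesis.) Hence the new content is the modular identification of the unit root $f_p(-1)$, together with the unproved cases of the supercongruence. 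Writing $\frac{(1/2)_n}{n!}=(-1)^n\binom{-1/2}{n}$ gives $4^{-n}\binom{2n}{n}\is(-1)^n\binom{(p-1)/2}{n}\mod p$, so that
$$F_p(-1)=\sum_{n=0}^{p-1}(-1)^n\left({(1/2)_n\over n!}\right)^d$$
reduces modulo $p$ to a product of Legendre symbols recognisable as a Gaussian hypergeometric function over $\bbbf_p$, whose value computes the Frobenius trace on the hypergeometric motive $H(\tfrac12,\dots,\tfrac12;1,\dots,1\,|-1)$.

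For the modular identification with $d=3$, the key observation is that at $z=-1$ this motive has complex multiplication by $\bbbq(\sqrt{-2})$, whose norm form is $p=a^2+2b^2$ and whose split primes are exactly $p\is1,3\mod8$. Concretely its essential rank-$2$ constituent is, up to Tate twist, the CM motive attached to a Hecke character $\psi$ of $\bbbq(\sqrt{-2})$ of the appropriate infinity type, so that at a split prime $p=\mathfrak p\bar{\mathfrak p}$ the Frobenius eigenvalues are $\psi(\mathfrak p)$ and $\psi(\bar{\mathfrak p})$, with product $\leg{-8}{p}p^2$. Evaluating $\psi(\mathfrak p)+\psi(\bar{\mathfrak p})$ through the Jacobi-sum decomposition of $p$ in $\bbbq(\sqrt{-2})$, made $p$-adically precise by the Gross--Koblitz formula relating Gauss sums to the $p$-adic Gamma function, should yield exactly $c_p=2(2b^2-a^2)$ and identify the local Euler factor as $x^2-c_px+\leg{-8}{p}p^2$, matching $\eta(\tau)^2\eta(2\tau)\eta(4\tau)\eta(8\tau)^2\in S_3(8,\chi(-8))$. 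Since Dwork's $f_p(-1)$ is the $p$-adic unit among the two Frobenius eigenvalues and the companion eigenvalue has $p$-adic valuation $2$, it follows formally that $f_p(-1)\is c_p\mod{p^2}$.

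For $d=5$ the same field $\bbbq(\sqrt{-2})$ governs the CM form $g\in S_3(256,\chi(-4))$ with Hecke character $\psi$; the relevant weight-$4$ rank-$2$ piece of $H(\tfrac12,\dots;1,\dots\,|-1)$ is, up to a quadratic twist by $\leg{-8}{\cdot}$, the CM motive attached to the \emph{square} $\psi^2$. Its Frobenius trace at a split prime is then
$$\psi^2(\mathfrak p)+\psi^2(\bar{\mathfrak p})=\bigl(\psi(\mathfrak p)+\psi(\bar{\mathfrak p})\bigr)^2-2\psi(\mathfrak p)\psi(\bar{\mathfrak p})=\delta_p^2-2p^2,$$
the cross term $-2\psi(\mathfrak p)\psi(\bar{\mathfrak p})$ producing the $-2p^2$ and the twist accounting for the sign behaviour at $p\is3\mod4$. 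I would derive this squaring on the level of character sums by the same Jacobi-sum evaluation as for $d=3$, the fifth power in $\bigl((1/2)_n/n!\bigr)^5$ forcing the governing multiplicative character to appear squared. The resulting Euler factor is $x^2-d_px+p^4$, whose unit root gives $f_p(-1)\is d_p\mod{p^4}$; so for the $d=5$ statement the only binding constraint is, once more, the supercongruence $F_p(-1)\is f_p(-1)$, which experiment confirms only modulo $p^2$.

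The main obstacle is the supercongruence itself in the cases not covered by Theorem \ref{main}, namely $p\is1\mod4$, where it coincides with the open $\epsilon=-1$ case of Conjecture \ref{conj1}. This demands control of the second-order $p$-adic term of $F_p(-1)$, not merely its reduction modulo $p$, so the finite-field trace formula of the preceding paragraphs does not suffice. The most promising route is a Wilf--Zeilberger certificate for the summand $(-1)^n\bigl((1/2)_n/n!\bigr)^d$ whose telescoped boundary contributions vanish to the required $p$-adic order, or, equivalently, a Dwork-style sharpening of (\ref{cauchy}) to modulo $p^3$ at $z_0=-1$; establishing the existence of such a certificate is exactly where the difficulty concentrates and is why the statement remains conjectural for $p\is1\mod4$. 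By contrast the modular identifications $c_p=2(2b^2-a^2)$ and $d_p=\leg{-8}{p}(\delta_p^2-2p^2)$ are Jacobi-sum computations that the CM structure at $-1$ should render unconditional.
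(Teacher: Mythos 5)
The statement you set out to prove is Conjecture \ref{conj3}: the paper itself offers no proof of it, only numerical evidence and the (equally conjectural) factorizations of $Z_p(-1,T)$ in Section \ref{motive}, with the identification of $c_p$ and $d_p$ explicitly credited to Roberts and Zudilin as conjectural. So there is no paper proof to compare against, and the question is whether your argument actually closes the conjecture. It does not; it is a strategy sketch whose load-bearing steps are asserted rather than carried out. Your decomposition into a supercongruence $F_p(-1)\is f_p(-1)\mod{p^2}$ plus a unit-root identification is sensible and matches the paper's own motivic heuristics, and you correctly locate which half of the supercongruence is known: for odd $d$ one has $\epsilon_p=-1$ exactly when $p\is3\mod{4}$, so Theorem \ref{main} covers those primes, while $p\is1\mod{4}$ is the open $\epsilon=-1$ case of Conjecture \ref{conj1}. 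But your proposed routes there --- a WZ certificate with $p$-adically small boundary terms, or a sharpening of (\ref{cauchy}) --- are named as hopes, not constructed, so your ``proof'' inherits precisely the hole the paper acknowledges.

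The modular identifications are also not established. That the rank-$2$ constituent of the hypergeometric motive at $t=-1$ is the CM motive of a specific Hecke character of $\bbbq(\sqrt{-2})$ (and, for $d=5$, of its square up to a quadratic twist) is exactly the content the paper leaves conjectural; saying the Gross--Koblitz/Jacobi-sum evaluation ``should yield exactly'' $c_p=2(2b^2-a^2)$ is a promissory note, not a computation. Moreover your $d=5$ trace formula glosses the nebentypus: for $g\in S_3(256,\chi(-4))$ the Frobenius product at $p$ is $\leg{-4}{p}p^2$, not $p^2$, so $\psi^2(\mathfrak p)+\psi^2(\bar{\mathfrak p})=\delta_p^2-2\leg{-4}{p}p^2$, and the sign bookkeeping you delegate to the twist by $\leg{-8}{\cdot}$ must actually be checked against $d_p=\leg{-8}{p}(\delta_p^2-2p^2)$. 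Finally, your parenthetical claim that inert primes force $F_p(-1)\is0\mod{p}$ is unsupported by anything in the paper: Corollary \ref{Fzero} concerns $F_p(-\epsilon_p)$, which for odd $d$ and $p\is3\mod{4}$ is $F_p(1)$, not $F_p(-1)$; the vanishing at inert primes is itself a consequence of the conjectural CM identification, so you cannot use it as an input. In sum, the proposal is a reasonable research plan consistent with the paper's Section \ref{motive}, but it leaves Conjecture \ref{conj3} exactly as open as the paper does.
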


A natural, and often asked question, is what happens with the values of 
$F_{p^s}(\epsilon)$ with $\epsilon=\pm1$. Numerical experiment suggests the following 
generalization of Theorem \ref{main} might be true.

\begin{conjecture}
Let $\epsilon=\pm1$ and suppose that $F_p(\epsilon)$ is a $p$-adic unit. Then we have
$$F_{p^s}(\epsilon)\is f_p(\epsilon)F_{p^{s-1}}(\epsilon)\mod{p^{2s}}$$
for all integers $s\ge 1$. 
\end{conjecture}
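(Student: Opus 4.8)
The plan is to recast the statement as a \emph{doubling} of Dwork's congruence \fref{cauchy}, whose $s=1$ instance is exactly Theorem \ref{main}, and then to establish the general instance. Write $Q_s=F_{p^s}(\epsilon)/F_{p^{s-1}}(\epsilon)$. Since each $F_{p^{s-1}}(\epsilon)$ is a $p$-adic unit, the asserted congruence $F_{p^s}(\epsilon)\is f_p(\epsilon)F_{p^{s-1}}(\epsilon)\mod{p^{2s}}$ is equivalent to $Q_s\is f_p(\epsilon)\mod{p^{2s}}$. Because $f_p(\epsilon)=\lim_t Q_t$ while \fref{cauchy} gives only $Q_{t+1}\is Q_t\mod{p^t}$, one has $f_p(\epsilon)-Q_s=\sum_{t\ge s}(Q_{t+1}-Q_t)$, so the whole family of congruences is equivalent to the refined statement
\[
Q_{s+1}(\epsilon)\is Q_s(\epsilon)\mod{p^{2s}}\qquad(s\ge1),
\]
that is, to showing that the gaps in Dwork's Cauchy sequence are $p$-adically twice as small as the general theory guarantees. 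For $s=1$ this follows from Theorem \ref{main} combined with \fref{cauchy}, which supplies the base case.

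Next I would isolate where the extra $p$-divisibility originates. Since $(1/2)_n/n!=\binom{2n}{n}/4^n$ with $4^n$ a unit, $v_p\big((1/2)_n/n!\big)$ equals the number of carries in the base-$p$ addition $n+n$, which is at least the number of base-$p$ digits $n_i$ of $n$ with $n_i\ge(p+1)/2$. Hence $A_n=\big((1/2)_n/n!\big)^d$ gains a factor $p^d$, and so $p^2$ since $d\ge2$, for every such \emph{large} digit. The plan is to use this digit-wise estimate to discard, modulo $p^{2s}$, all terms of $F_{p^{s+1}}(\epsilon)$ and $F_{p^s}(\epsilon)$ indexed by $n$ with too many large digits, reducing the doubled congruence to a comparison of the surviving sub-sums over indices whose base-$p$ digits all avoid the upper half of $\{0,\dots,p-1\}$.

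The key input beyond Dwork is a reflection symmetry. At level one, the reflection $n\mapsto p-1-n$ relates $A_n$ to $A_{p-1-n}$ up to sign and up to the extra powers of $p$ coming from large digits, and the $d$-th power turns the elementary sign $(-1)^{(p-1)/2}$ into $(-1)^{d(p-1)/2}=\epsilon_p$; since $\epsilon^{\,p-1-n}=\epsilon^{n}$ for $\epsilon=\pm1$, the value $\epsilon=\epsilon_p$ is precisely the one for which the reflected term matches the original. I would combine this symmetry with Dwork's formal factorization of $F(z)/F(z^p)$ so that the error $Q_{s+1}-Q_s$, a priori only divisible by $p^s$, is exhibited as a sum that is antisymmetric under the digit-wise reflection and so cancels to first order; the residual second-order term should then carry the missing factor $p^s$, compounded over the $s$ base-$p$ positions, yielding $p^{2s}$.

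The main obstacle is making this cancellation uniform across all digit positions and all $s$ simultaneously. For $s=1$ one reflects a single length-$p$ block, which is what Theorem \ref{main} handles; for general $s$ the reflections on the $s$ blocks must be controlled jointly, and the cross-terms — those pairing a lower digit at one position with a near-middle digit at another, where the carry threshold drops to $(p-1)/2$ — are exactly where the naive valuation bound falls one power of $p$ short. Overcoming this will most likely require a genuinely structural form of the reflection identity, phrased through the Frobenius structure on the hypergeometric motive of Section \ref{motive} together with the Gross--Koblitz relation for the $p$-adic Gamma function, rather than the term-by-term estimate that suffices at level one. This is presumably why the statement is recorded here only as a conjecture.
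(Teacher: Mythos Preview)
The statement you were asked to prove is recorded in the paper explicitly as a \emph{conjecture}, motivated only by numerical experiment; the paper offers no proof and no sketch of one. There is therefore nothing to compare your proposal against.

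Your submission is not a proof either, and you say so yourself in the final sentence. What you have written is a plausible heuristic outline: the reformulation as $Q_{s+1}\equiv Q_s\pmod{p^{2s}}$ via telescoping is correct and useful, and the digit-wise valuation estimate for $(1/2)_n/n!$ is standard. But the central step --- showing that the reflection symmetry forces the error terms to cancel to one extra order uniformly in $s$ --- is precisely the missing idea, and you correctly flag it as an obstacle rather than resolving it. One slip worth noting: you invoke Theorem~\ref{main} for the base case $s=1$, but that theorem only treats $\epsilon=\epsilon_p$, whereas the conjecture here is stated for both signs; the case $\epsilon=-\epsilon_p$ with $p\equiv1\pmod4$ remains open even at level $s=1$ (that is Conjecture~\ref{conj1}).
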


Beside supercongruences for hypergeometric sums with parameters $1/2$ and $1$
there exist several other types for other parameter choices. We refer
to \cite{LTYZ} for a proof of Rodriguez-Villegas's mod $p^3$ conjecture
for the 14 truncated hypergeometric sums of order $4$ corresponding to
Calabi-Yau varieties.

The key to the proof of Theorem \ref{main} is the special
symmetry of the hypergeometric differential equation for $F(z)$.
It reads $\theta^dF=z(\theta+1/2)^dF$,
where $\theta$ is the derivation $z{d\over dz}$. A simple verification shows
that if $F(z)$ is any solution of this differential equation then so is $z^{-1/2}F(1/z)$.
The actual proof of Theorem \ref{main} is completely elementary, but at the end of
the proof we sketch the role of the symmetry in the background.

\section{Proofs}
We start with a few well-known elementary congruences.
\begin{lemma}\label{babbage}
For any odd prime $p$ and any integers $0<b\le a$ we have
$${ap\choose bp}\is{a\choose b}\mod{p^2}.$$
\end{lemma}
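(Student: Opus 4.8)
The plan is to prove the congruence by comparing the two sides as coefficients in a single polynomial identity modulo $p^2$. Since $\binom{ap}{bp}$ is the coefficient of $x^{bp}$ in $(1+x)^{ap}$ and, when $0<b\le a$, the integer $\binom{a}{b}$ is the coefficient of $x^{bp}$ in $(1+x^p)^a$, it suffices to show that $(1+x)^{ap}$ and $(1+x^p)^a$ agree in their $x^{bp}$-coefficient modulo $p^2$.

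First I would record the exact shape of $(1+x)^p$ modulo $p^2$. Using that $p\mid\binom{p}{k}$ for $1\le k\le p-1$ (because $p$ is prime), each $\frac{1}{p}\binom{p}{k}$ is an integer, and so
$$(1+x)^p=1+x^p+p\,g(x),\qquad g(x)=\sum_{k=1}^{p-1}\frac{1}{p}\binom{p}{k}x^k,$$
where $g(x)$ has integer coefficients and every exponent occurring in $g(x)$ lies strictly between $0$ and $p$. Raising to the $a$-th power and discarding all terms divisible by $p^2$ then gives
$$(1+x)^{ap}=\bigl(1+x^p+p\,g(x)\bigr)^a\is(1+x^p)^a+a\,p\,(1+x^p)^{a-1}g(x)\mod{p^2}.$$

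The crux is a short exponent count on the correction term. Every monomial appearing in $(1+x^p)^{a-1}$ has exponent divisible by $p$, while every monomial of $g(x)$ has exponent in the range $1\le k\le p-1$; hence every monomial in their product has exponent $\not\is0\mod p$. In particular the coefficient of $x^{bp}$ in $a\,p\,(1+x^p)^{a-1}g(x)$ is zero, so comparing the $x^{bp}$-coefficients on both sides yields $\binom{ap}{bp}\is\binom{a}{b}\mod{p^2}$. I do not expect a genuine obstacle here: the only points needing care are the integrality of the coefficients of $g(x)$ and the verification that the linear-in-$p$ correction contributes nothing to the relevant coefficient. An alternative, more computational route would expand the factorial quotient $\binom{ap}{bp}=(ap)!/\bigl((bp)!\,((a-b)p)!\bigr)$ and separate the factors divisible by $p$ from the rest, but the generating-function argument is cleaner and avoids tracking $p$-adic valuations by hand.
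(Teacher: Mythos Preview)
Your proof is correct and takes a genuinely different route from the paper's. The paper does precisely what you flag as the ``alternative, more computational route'': it writes
$\binom{ap}{bp}=\prod_{k=1}^{(a-b)p}\frac{k+bp}{k}$, separates the factors with $p\mid k$ (which give $\binom{a}{b}$) from those with $p\nmid k$, and reduces the latter product modulo $p^2$ using the harmonic-sum congruence $\sum_{k=1}^{p-1}1/k\is0\mod p$. Your generating-function argument avoids any explicit harmonic sum: once $(1+x)^p=1+x^p+pg(x)$ is raised to the $a$-th power, a pure exponent count shows the $O(p)$ correction contributes nothing at degree $bp$. This is arguably cleaner for the mod $p^2$ statement. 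On the other hand, the paper's factorial-product approach extends directly to Wolstenholme's strengthening $\binom{ap}{bp}\is\binom{a}{b}\mod{p^3}$ for $p\ge5$ by invoking $\sum_{k=1}^{p-1}1/k\is0\mod{p^2}$, whereas your method would require analyzing the $p^2g(x)^2$ term as well.
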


The theorem was proven by Babbage in 1819, \cite{babbage}.
In 1862 Wolstenholme \cite{wolstenholme} 
showed that this congruence holds modulo $p^3$
for all primes $p\ge5$.

\begin{proof}
Observe that
$${ap\choose bp}=\prod_{k=1}^{(a-b)p}{k+bp\over k}.$$
Split the product into factors with $p|k$ (and write $k=lp$) and factors
where $k$ is not divisible by $p$. We get
$${ap\choose bp}=\prod_{l=1}^{a-b}{l+b\over l}\prod_{k=1\atop (k,p)=1}^{(a-b)p}
\left(1+{bp\over k}\right),$$
where the second product is restricted to $k\not\is0\mod{p}$.
The first factor equals ${a\choose b}$, the second is modulo $p^2$ equal
to
$$1+\sum_{k=1\atop(k,p)=1}^{(a-b)p}{bp\over k}.$$
The well-known fact that $\sum_{k=1}^{p-1}1/k\is0\mod{p}$ implies that 
the second product is $1\mod{p^2}$. This proves our assertion.
\end{proof}

\begin{lemma}\label{eisenstein}
Let $\gamma=(4^{p-1}-1)/p$. Then
$$\sum_{j=1}^{p-1}{(-1)^{j-1}\over j}\is \gamma\mod{p}.$$
\end{lemma}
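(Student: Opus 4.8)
The plan is to recognize $\gamma$ as essentially a Fermat quotient and then to evaluate the alternating sum by a binomial expansion of $2^p$. First I would factor $4^{p-1}-1=(2^{p-1}-1)(2^{p-1}+1)$ and write
$$\gamma=\frac{2^{p-1}-1}{p}\,(2^{p-1}+1).$$
Since $2^{p-1}\is1\mod{p}$ by Fermat's little theorem, the second factor is $\is2\mod{p}$, while the first factor is a $p$-adic integer; hence
$$\gamma\is 2\cdot\frac{2^{p-1}-1}{p}=\frac{2^p-2}{p}\mod{p}.$$
So it suffices to prove that the alternating harmonic sum equals $(2^p-2)/p$ modulo $p$.

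For this I would expand $2^p=(1+1)^p=\sum_{k=0}^p\binom{p}{k}$ and peel off the two end terms to get $2^p-2=\sum_{k=1}^{p-1}\binom{p}{k}$, so that
$$\frac{2^p-2}{p}=\sum_{k=1}^{p-1}\frac1p\binom{p}{k}.$$
Then I would use the identity $\frac1p\binom{p}{k}=\frac1k\binom{p-1}{k-1}$ together with $\binom{p-1}{k-1}=\prod_{i=1}^{k-1}\frac{p-i}{i}\is(-1)^{k-1}\mod{p}$, which gives $\frac1p\binom{p}{k}\is\frac{(-1)^{k-1}}{k}\mod{p}$ for each $k$ in the range. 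Summing over $k$ yields
$$\frac{2^p-2}{p}\is\sum_{k=1}^{p-1}\frac{(-1)^{k-1}}{k}\mod{p},$$
and combining this with the previous step proves the lemma.

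The only point that needs care is the legitimacy of reducing the sum term by term: each $\frac1p\binom{p}{k}$ is genuinely a $p$-adic integer, since $p\mid\binom{p}{k}$ for $1\le k\le p-1$, and $1/k$ makes sense in $\bbbz/p\bbbz$ because $1\le k\le p-1$, so the congruence $\frac1p\binom{p}{k}\is\frac{(-1)^{k-1}}{k}\mod{p}$ is an identity of $p$-adic integers and may be summed. I expect no deeper obstacle. An equally short alternative would avoid the binomial expansion entirely: splitting the alternating sum into odd and even indices gives $\sum_{j=1}^{p-1}\frac{(-1)^{j-1}}{j}=\sum_{j=1}^{p-1}\frac1j-\sum_{i=1}^{(p-1)/2}\frac1i$, and since $\sum_{j=1}^{p-1}1/j\is0\mod{p}$, the fact already invoked in Lemma \ref{babbage}, the claim reduces to the classical congruence $\sum_{i=1}^{(p-1)/2}1/i\is-2(2^{p-1}-1)/p\mod{p}$. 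I would nonetheless prefer the binomial route, since it produces the alternating sum on the nose.
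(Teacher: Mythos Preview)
Your proof is correct and follows essentially the same route as the paper's: reduce $\gamma$ to $(2^p-2)/p$ modulo $p$, then expand $2^p-2$ binomially and use $\binom{p-1}{k-1}\is(-1)^{k-1}\mod{p}$. The only cosmetic differences are that the paper factors $4^{p-1}-1$ as $\tfrac14(2^p-2)(2^p+2)$ rather than $(2^{p-1}-1)(2^{p-1}+1)$, and obtains $\binom{p-1}{k-1}\is(-1)^{k-1}$ via the generating function $(1+x)^{p-1}\is(x^p+1)/(x+1)$ rather than the product formula.
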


This lemma occurs in the work of Eisenstein \cite{Eisenstein}.

\begin{proof}
First notice that
$${4^{p-1}-1\over p}={1\over 4p}(4^p-4)={2^p-2\over p}{2^p+2\over 4}.$$
By Fermat the last factor is $1\mod{p}$ and we get that
$${4^{p-1}-1\over p}\is{2^p-2\over p}\mod{p}.$$
We compute the latter modulo $p$. 
$${1\over p}(2^p-2)={1\over p}\sum_{k=1}^{p-1}{p\choose k}
=\sum_{k=1}^{p-1}{1\over k}{p-1\choose k-1}.$$
The number ${p-1\choose k-1}$ is the coefficient of $x^{k-1}$ in
$$(1+x)^{p-1}\is {x^p+1\over x+1}\is 1-x+x^2-x^3+\cdots+x^{p-1}\mod{p}.$$
Hence ${p-1\choose k-1}\is (-1)^{k-1}\mod{p}$ and thus our congruence
follows. 
\end{proof}

\begin{lemma}\label{symmetry}
Define $\alpha_r={(1/2)_r\over r!}$. Then for any odd prime $p$ and any
integer $0\le r<p/2$ we have
$$\alpha_{{p-1\over2}-r}\is(-1)^{p-1\over2}\alpha_r\mod{p}.$$
\end{lemma}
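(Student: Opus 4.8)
The plan is to convert everything into residues modulo $p$ by replacing $1/2$ with the inverse of $2$. Writing $m=(p-1)/2$, the identity $2(m+1)=p+1\is1\mod{p}$ shows $1/2\is m+1\mod{p}$, so each Pochhammer factor satisfies $1/2+k\is m+1+k\mod{p}$. Taking the product, I would obtain
$$(1/2)_r\is (m+1)(m+2)\cdots(m+r)={(m+r)!\over m!}\mod{p},$$
and the same computation with $r$ replaced by $m-r$ gives, using $2m-r=p-1-r$,
$$(1/2)_{m-r}\is{(2m-r)!\over m!}={(p-1-r)!\over m!}\mod{p}.$$
Dividing by $r!$ and $(m-r)!$ respectively then yields the clean expressions $\alpha_r\is (m+r)!/(m!\,r!)$ and $\alpha_{m-r}\is (p-1-r)!/(m!\,(m-r)!)$ modulo $p$. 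Note that since $0\le r\le m$, every factorial here involves only integers between $1$ and $p-1$, so all quantities are $p$-adic units and the divisions are legitimate mod $p$.

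Next I would cancel the common factor $m!$ and reduce the claim to the congruence
$$
{(p-1-r)!\over(m-r)!}\is(-1)^m{(m+r)!\over r!}\mod{p}.
$$
Both sides are products of exactly $m$ consecutive integers: the left side runs over $j=m-r+1,\dots,p-1-r$ and the right side over $i=r+1,\dots,m+r$. The key step is the reflection $j\mapsto p-j$, under which $j\is -(p-j)\mod{p}$; as $j$ runs through $\{m-r+1,\dots,p-1-r\}$ the value $p-j$ runs exactly through $\{r+1,\dots,m+r\}$. Pairing the $m$ factors therefore produces the sign $(-1)^m$ together with the product on the right, which is precisely the desired congruence. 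Finally, $(-1)^m=(-1)^{(p-1)/2}$ recovers the sign in the statement.

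The computations above are routine; the only points requiring care are bookkeeping ones. First, I must check that the reduction $1/2\is m+1$ is applied to exactly the right number of factors so that the telescoping into factorials is correct, and that $2m-r=p-1-r$ is used to bring $(1/2)_{m-r}$ into factorial form. Second, and most important, I must verify that the index range of the reflection is exactly right: both products have $(p-1-r)-(m-r)=m$ and $(m+r)-r=m$ factors, and the map $j\mapsto p-j$ sends the first range onto the second bijectively. Ensuring that all indices stay strictly between $0$ and $p$, so that no spurious factor of $p$ appears and every term is invertible, is the main thing to confirm; once the ranges match, the sign $(-1)^m$ drops out immediately.
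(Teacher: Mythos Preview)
Your argument is correct. Both your proof and the paper's replace $1/2$ by its integer residue modulo $p$, but you use $1/2\equiv m+1$ while the paper uses the equivalent form $1/2\equiv 1/2-p/2=-m$ (these agree since $2m+1=p$). The paper then gets
\[
\alpha_r\equiv\frac{(-m)(-m+1)\cdots(-m+r-1)}{r!}=(-1)^r\binom{m}{r}\pmod p,
\]
after which the symmetry $\alpha_{m-r}\equiv(-1)^{m-r}\binom{m}{m-r}=(-1)^{m-r}\binom{m}{r}\equiv(-1)^m\alpha_r$ is a one-liner via $\binom{m}{r}=\binom{m}{m-r}$. Your choice $1/2\equiv m+1$ produces $\alpha_r\equiv\binom{m+r}{r}$ instead, which does not display the symmetry directly and forces the extra reflection $j\mapsto p-j$ on the product. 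Both routes are elementary and essentially equivalent; the paper's formulation just packages the sign and the symmetry more compactly.
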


\begin{proof}
Notice that 
$$\alpha_r\is{(1/2)_r\over r!}\is {(1/2-p/2)_r\over r!}\is(-1)^r{(p-1)/2\choose r}
\mod{p}.$$
The symmetry is now immediate from the last expression.
\end{proof}

A direct corollary is the following. 

\begin{corollary}\label{Fzero}
Suppose $p\is3\mod{4}$.
Then $F_p(-\epsilon_p)\is0\mod{p}$. 
\end{corollary}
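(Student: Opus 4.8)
The plan is to prove $F_p(-\epsilon_p)\is0\mod p$ directly from the symmetry of the coefficients established in Lemma \ref{symmetry}. First I would record what $-\epsilon_p$ is when $p\is3\mod4$: since $\epsilon_p=(-1)^{d(p-1)/2}$ and $(p-1)/2$ is odd here, we have $\epsilon_p=(-1)^d$, so $-\epsilon_p=(-1)^{d+1}$. The value of the truncated series we must control is
$$F_p(-\epsilon_p)=\sum_{n=0}^{p-1}\alpha_n^d(-\epsilon_p)^n,$$
where I write $\alpha_n=(1/2)_n/n!$ as in Lemma \ref{symmetry}.

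The key observation is that the summand vanishes modulo $p$ for the range $(p+1)/2\le n\le p-1$. Indeed, for $n$ in this range the Pochhammer symbol $(1/2)_n$ contains the factor $(1/2)+(p-1)/2=p/2$, which is $\is0\mod p$, so $\alpha_n\is0\mod p$ and hence $\alpha_n^d\is0\mod p$ for $d\ge2$. Thus only the terms $0\le n\le (p-1)/2$ survive modulo $p$, and I would reduce to
$$F_p(-\epsilon_p)\is\sum_{n=0}^{(p-1)/2}\alpha_n^d(-\epsilon_p)^n\mod p.$$
Now I would pair each index $n$ with its mirror image $(p-1)/2-n$ and apply Lemma \ref{symmetry}, which gives $\alpha_{(p-1)/2-n}\is(-1)^{(p-1)/2}\alpha_n\mod p$. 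Raising to the $d$-th power, $\alpha_{(p-1)/2-n}^d\is(-1)^{d(p-1)/2}\alpha_n^d=\epsilon_p\,\alpha_n^d\mod p$.

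The heart of the argument is to check that the sign contributed by the factor $(-\epsilon_p)^n$ exactly cancels under this pairing so that the two mirror terms annihilate each other. Substituting $m=(p-1)/2-n$, the factor $(-\epsilon_p)^m=(-\epsilon_p)^{(p-1)/2}(-\epsilon_p)^{-n}$; since $p\is3\mod4$ one computes $(-\epsilon_p)^{(p-1)/2}=-\epsilon_p$ (using that $(p-1)/2$ is odd and $\epsilon_p^{(p-1)/2}=\epsilon_p$), and $(-\epsilon_p)^{-n}\is(-\epsilon_p)^n\mod p$ because $(-\epsilon_p)^2=1$. Combining these with the coefficient relation, the term at index $m$ equals $-(-\epsilon_p)^n\alpha_n^d\cdot\epsilon_p\cdot(-\epsilon_p)=-(-\epsilon_p)^n\alpha_n^d$, the negative of the term at index $n$. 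The involution $n\mapsto(p-1)/2-n$ on $\{0,\dots,(p-1)/2\}$ therefore pairs up terms into cancelling pairs, and since $(p-1)/2$ is odd there is no fixed point left unpaired. Hence the whole sum is $\is0\mod p$, which is the claim.

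The main obstacle is purely bookkeeping of signs: one must track the three separate sign contributions (the power $(-\epsilon_p)^{(p-1)/2}$, the factor $\epsilon_p$ from the $d$-th power of the symmetry, and the evenness or oddness of $(p-1)/2$) and verify they conspire to give an exact cancellation rather than a doubling. The hypothesis $p\is3\mod4$ is exactly what makes $(p-1)/2$ odd, which is what both removes the fixed point of the involution and supplies the crucial extra minus sign; without it the same computation would instead give $F_p(\epsilon)\is2\sum(\cdots)$ and no vanishing. I expect the argument to be a few lines once the sign accounting is organized cleanly.
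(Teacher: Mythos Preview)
Your argument is correct and follows essentially the same route as the paper: reduce to the range $0\le n\le (p-1)/2$, apply Lemma~\ref{symmetry} under the substitution $n\mapsto (p-1)/2-n$, and track the signs to see that the sum is its own negative modulo $p$. The only cosmetic difference is that the paper phrases this as $F_p(-\epsilon_p)\equiv -F_p(-\epsilon_p)\pmod p$ in one stroke (which makes the fixed-point discussion unnecessary, since $2F_p(-\epsilon_p)\equiv 0$ already forces vanishing for odd $p$), whereas you pair terms explicitly; but the substance is identical.
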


\begin{proof}
Notice that
\begin{eqnarray*}
F_p(-\epsilon_p)&=&\sum_{r=0}^{(p-1)/2}\alpha_r^d(-\epsilon_p)^r\\
&\is& (-1)^{d(p-1)/2}\sum_{r=0}^{(p-1)/2}\alpha_{{p-1\over2}-r}^d(-\epsilon_p)^r\mod{p}\\
&\is& (-1)^{d(p-1)/2}(-\epsilon_p)^{{p-1\over2}}
\sum_{r=0}^{(p-1)/2}\alpha_{r}^d(-\epsilon_p)^r\mod{p}\\
&\is& -F_p(-\epsilon_p)\mod{p},
\end{eqnarray*}
which implies our assertion.
\end{proof}

\begin{lemma}\label{split}
Let $p$ be an odd prime and $r,r',t$ integers $\ge0$ with $r=pr'+t$
and $t<p$. Let $\alpha_r$ be as in the previous lemma and 
$\gamma=(4^{p-1}-1)/p$. If $p/2<t$, then $p$ divides $\alpha_r$ and
if $t<p/2$ we have
$$\alpha_r\is \alpha_{r'}\alpha_t\left(1-\gamma pr'+
2pr'\sum_{j=1}^{2t}{(-1)^{j-1}\over j}\right)\mod{p^2}.$$
\end{lemma}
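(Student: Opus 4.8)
The plan is to analyze $\alpha_r = (1/2)_r/r!$ by splitting the defining product according to whether each factor's index passes through a multiple of $p$. Writing $r = pr' + t$ with $0 \le t < p$, I would separate both the numerator $(1/2)_r = \prod_{j=0}^{r-1}(1/2 + j)$ and the denominator $r! = \prod_{k=1}^{r}k$ into blocks indexed by how many multiples of $p$ they contain. The key observation driving the statement is that $1/2 + j \equiv 0 \pmod p$ exactly when $2j+1 \equiv 0 \pmod p$, i.e. $j \equiv (p-1)/2 \pmod p$; so the numerator acquires a factor divisible by $p$ each time $j$ crosses a residue class containing $(p-1)/2$. Counting these crossings against the $r'$ factors of $p$ in the denominator coming from $k = p, 2p, \dots, pr'$ will explain both the divisibility claim (when $t > p/2$ there is one extra factor of $p$ on top in the last partial block, so $p \mid \alpha_r$ fails to cancel — wait, one must check the direction carefully) and, when $t < p/2$, the clean factorization into $\alpha_{r'}\alpha_t$ times a correction.

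Concretely, first I would handle the denominator: $r! = (pr')! \cdot \prod_{k=pr'+1}^{pr'+t} k$, and the factors divisible by $p$ among $1,\dots,pr'$ are $p, 2p, \dots, pr'$, contributing $p^{r'} r'!$ times a product of units. Second, for the numerator I would group $(1/2)_r = \prod_{j=0}^{r-1}(j+1/2)$ into consecutive blocks of length $p$. In each full block $\{lp, \dots, lp + p - 1\}$ there is exactly one index $j$ with $j \equiv (p-1)/2$, namely $j = lp + (p-1)/2$, for which $j + 1/2 = lp + p/2 = p(l + 1/2) = p(2l+1)/2$ contributes a single factor of $p$; the remaining $p-1$ factors in that block reduce mod $p$ to $\prod_{\text{units}}$. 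Collecting these across the $r'$ complete blocks gives a factor $p^{r'}$ matching the denominator, so the gross powers of $p$ cancel and leave $\alpha_{r'}$ times the contribution of the partial final block of length $t$. The partial block $\{pr', \dots, pr'+t-1\}$ contains the special index $pr' + (p-1)/2$ only when $t > (p-1)/2$, which is precisely the source of the extra uncancelled $p$ forcing $p \mid \alpha_r$ when $t > p/2$; when $t < p/2$ no such factor appears and the partial block reduces to $\alpha_t$ up to the mod $p^2$ correction.

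The main work, and the expected obstacle, is extracting the first-order correction term $1 - \gamma p r' + 2pr'\sum_{j=1}^{2t}(-1)^{j-1}/j$ modulo $p^2$. This means I cannot merely reduce the unit parts modulo $p$; I must track them to second order. After factoring out the powers of $p$, each surviving factor has the shape $1 + p\,(\text{something})/(\text{unit})$, and taking the product over all blocks produces, modulo $p^2$, a sum $1 + p\sum(\cdots)$. I would expand $(j + 1/2)/(k)$-type ratios to isolate the $O(p)$ deviations from their mod-$p$ reductions, sum the resulting harmonic-type expressions, and identify them with the two pieces: the $-\gamma p r'$ term should emerge from the numerator factors $(2l+1)/2$ in the full blocks via the telescoping of $\sum 1/(2l+1)$ together with Lemma~\ref{eisenstein}, while the $2pr'\sum_{j=1}^{2t}(-1)^{j-1}/j$ term should come from interaction between the $r'$ full blocks and the residues up to $t$ in the final partial block. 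The delicate bookkeeping is keeping the denominators' and numerators' $O(p)$ contributions correctly paired so that $\sum 1/k \equiv 0$ cancellations (as used in Lemma~\ref{babbage}) kill the unwanted terms and leave exactly the two stated sums; this is where I would expect sign errors and miscounting, and I would verify the final expression against small cases such as $r' = 1$.
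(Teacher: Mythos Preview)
Your plan is a genuine alternative to the paper's argument, but it differs in a substantial way and contains one misidentification that would stall the computation.

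\textbf{What the paper does.} The paper does \emph{not} work directly with $(1/2)_r/r!$. It passes to the central binomial coefficient via $\alpha_r=\binom{2r}{r}/4^r$, writes
\[
\binom{2r}{r}=\binom{2pr'}{pr'}\cdot\frac{\prod_{k=1}^{2t}(k+2pr')}{\prod_{k=1}^{t}(k+pr')^2},
\]
applies Babbage's congruence $\binom{2pr'}{pr'}\equiv\binom{2r'}{r'}\pmod{p^2}$ to the first factor, expands the second to first order to obtain $\binom{2t}{t}\bigl(1+2pr'\sum_{k=1}^{2t}(-1)^{k-1}/k\bigr)$, and finally divides through by $4^{r}=4^{pr'+t}$. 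The factor $4^{r'(1-p)}\equiv 1-\gamma p r'\pmod{p^2}$ is precisely where the $-\gamma p r'$ term appears. So in the paper's route the two correction terms have distinct, transparent origins: the alternating harmonic sum from the tail product, and $\gamma$ from the $4^r$ normalisation.

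\textbf{Where your plan goes astray.} Your block decomposition is fine, and your location of the $2pr'\sum_{j=1}^{2t}(-1)^{j-1}/j$ term in the partial block is correct: the ratio $\prod_{j=0}^{t-1}(pr'+j+\tfrac12)\big/\prod_{m=1}^{t}(pr'+m)$ expands to $\alpha_t\bigl(1+2pr'\sum(-1)^{k-1}/k\bigr)$ as you expect. But your claim that ``the $-\gamma p r'$ term should emerge from the numerator factors $(2l+1)/2$ in the full blocks via the telescoping of $\sum 1/(2l+1)$'' is wrong. Those special factors $p(l+\tfrac12)$, for $l=0,\dots,r'-1$, contribute \emph{exactly} $p^{r'}(1/2)_{r'}$, which together with the $p^{r'}r'!$ from the denominator yields $\alpha_{r'}$ on the nose with no $O(p)$ correction; there is nothing to telescope. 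In your direct approach the $\gamma$ must instead come from the constant (i.e.\ $l$-independent) part of each full-block unit ratio
\[
\frac{\prod_{j\ne(p-1)/2}(j+\tfrac12)}{(p-1)!}\ \equiv\ 2\alpha_p\ \equiv\ 1-\gamma p\pmod{p^2},
\]
and establishing this is essentially Babbage's congruence in disguise (one uses $\binom{2p}{p}\equiv 2\pmod{p^2}$ and then the $4^{-p}$). The $l$-dependent first-order corrections in the full blocks vanish because $\sum_{j\ne(p-1)/2}(j+\tfrac12)^{-1}\equiv 0\pmod p$, so they do not help produce $\gamma$ either. Once you redirect the source of $\gamma$ accordingly, your approach goes through, but it is more bookkeeping for the same content; the paper's detour through $\binom{2r}{r}$ buys you Babbage off the shelf and isolates $\gamma$ in the single factor $4^{r'(1-p)}$.
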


Modulo $p$ the congruence reads $\alpha_r\is\alpha_{r'}\alpha_t\mod{p}$. This is
known as the Lucas-property for $\alpha_r$. 

\begin{proof}
Instead of $\alpha_r$ we start with ${2r\choose r}$. Notice that
$${2r\choose r}={2pr'\choose pr'}{\prod_{k=1}^{2t}(k+2pr')\over
\prod_{k=1}^t(k+pr')^2}.$$
Note that if $t>p/2$ the product in the numerator contains the factor
$p+2pr'$ and is therefore divisible by $p$. Suppose from now on that $t<p/2$.

Consider the equation modulo $p^2$. We apply Lemma \ref{babbage} to the
binomial coefficient and get ${2r'\choose r'}$. The product over $k$ becomes
${2t\choose t}$ times
$$1+2pr'\left(\sum_{k=1}^{2t}{1\over k}-\sum_{k=1}^t{1\over k}\right)\mod{p^2}.$$
Notice also that
$$\sum_{k=1}^{2t}{1\over k}-\sum_{k=1}^t{1\over k}=
\sum_{k=1}^{2t}{(-1)^{k-1}\over k}.$$
Finally use the relation ${2r\choose r}=4^r\alpha_r$. Putting everything together
we find that
$$\alpha_r\is\alpha_{r'}\alpha_t 4^{r'(1-p)}\left(
1+2pr'\sum_{k=1}^{2t}{(-1)^{k-1}\over k}\right)\mod{p^2}.$$
Using $4^{r'(1-p)}\is1-pr'\gamma\mod{p}$ yields our assertion.
\end{proof}

{\it Proof} of Theorem \ref{main}.

In view of congruences (\ref{cauchy}) it suffices to prove that 
$F_{p^s}(\epsilon_p)\is F_{p}(\epsilon_p)F_{p^{s-1}}(\epsilon_p)\mod{p^2}$
for $s=2$, but we will do it for all $s\ge2$. Use the notation $\alpha_r
={(1/2)_r\over r!}$ and Lemma~\ref{split} to find

$$
F_{p^s}(z)=\sum_{r'=0}^{p^{s-1}-1}\sum_{t=0}^{(p-1)/2}
(\alpha_{r'}\alpha_t)^dz^{pr'+t}
\left(1-\gamma dpr'+2dpr'\sum_{k=1}^{2t}{(-1)^{k-1}\over k}\right)\mod{p^2}.
$$

The terms with $t>p/2$ do not occur since $\alpha_r^d\is0\mod{p^2}$
whenever $t>p/2$. This gives
$$F_{p^s}(z)\is F_p(z)F_{p^{s-1}}(z^p)+pd\left(G_1(z)-\gamma F_p(z)\right)
\sum_{r'=0}^{p^{s-1}-1}r'z^{pr'}
\alpha_{r'}^d\mod{p^2}$$
where
$$G_1(z)=
2\sum_{t=0}^{(p-1)/2}\left(\sum_{k=1}^{2t}{(-1)^{k-1}\over k}\right)
\alpha_t^d z^t.$$
In order to arrive at our result we set $z=\epsilon_p$ and show that
$G_1(\epsilon_p)\is\gamma F_p(\epsilon_p)\mod{p}$.
Consider $G_1(\epsilon_p)=2\Sigma=\Sigma+\Sigma$ as a sum of two (equal) sums over $t$. 
In one of these we replace $t$ by $(p-1)/2-t$ and obtain
$$\sum_{t=0}^{(p-1)/2}\left(\sum_{k=1}^{p-1-2t}{(-1)^{k-1}\over k}\right)
\alpha_{(p-1)/2-t}^d\epsilon_p^{(p-1)/2-t}.$$
Apply Lemma \ref{symmetry} and replace $k$ in the inner summation by
$p-k$. We get
$$\sum_{t=0}^{(p-1)/2}\left(\sum_{k=2t+1}^{p-1}{(-1)^{-p+k-1}\over p-k}\right)
\alpha_{t}^d\epsilon_p^{t}\mod{p}.$$
This equals
$$\sum_{t=0}^{(p-1)/2}\left(\sum_{k=2t+1}^{p-1}{(-1)^{k-1}\over k}\right)
\alpha_{t}^d\epsilon_p^{t}\mod{p}$$
Thus we obtain after addition of $\Sigma$,
$$G_1(\epsilon_p)\is\sum_{t=0}^{(p-1)/2}\left(\sum_{k=1}^{p-1}{(-1)^{k-1}\over k}\right)
\alpha_{t}^d\epsilon_p^{t}\is \left(\sum_{k=1}^{p-1}{(-1)^{k-1}\over k}\right)
F_p(\epsilon_p)\mod{p^2}.$$
Application of Lemma \ref{eisenstein} yields the desired result.
\medskip

\hfill$\Box$\medskip

\section{The underlying mechanism}
The proof of our main result uses a symmetry of the
polynomials $F_p(z),G_1(z)$ modulo $p$.
We show here how this is forced by the symmetry of the hypergeometric equation.
One easily sees that $F_p(z)\mod{p}$ is the unique polynomial of degree $<p/2$ which
satifies our hypergeometric differential equation modulo $p$ and which has
constant term $1$. 
Furthermore, $F_p(z)\log z+G_p(z)$ is another solution modulo $p$. By the symmetry
of our equation $z^{(p-1)/2}F_p(1/z)$ is also a polynomial solution modulo $p$. 
Hence, by uniqueness of $F_p$, $z^{(p-1)/2}F_p(1/z)\is\lambda F_p(z)\mod{p}$
for some $\lambda$. To determine $\lambda$ we set $z=\epsilon_p$. Then
$\epsilon_pF(\epsilon_p)=\lambda F(\epsilon_p)$. Since $F(\epsilon_p)$ is a $p$-adic
unit by assumption we conclude that $\lambda=\epsilon_p$.
Hence $F_p(z)$ is a reciprocal or anti-reciprocal polynomial. We observe that
$z^{(p-1)/2}F_p(1/z)\log(1/z)+z^{(p-1)/2}G_p(1/z)$ is also a mod $p$ solution.
Multiply by $\epsilon_p$ and add $F_p(z)\log z+G_p(z)$. We find the new solution
$G_p(z)+\epsilon_pz^{(p-1)/2}G_p(1/z)$ which is a polynomial
solution. Hence it equals $\mu F_p(z)$ for some $\mu$. To find the value of $\mu$
we set $z=0$. The constant term of $G_p(z)$ is $0$ and the constant term of 
$\epsilon_pz^{(p-1)/2}G_p(1/z)$ is the leading term of $\epsilon_pG_p(z)$, which is 
$2\sum_{j=1}^{p-1}{(-1)^{j-1}\over j}$, hence
$2\gamma$ by Lemma \ref{eisenstein}.
Using $F_p(0)=1$ we conclude that $\mu=2\gamma$. Now set $z=\epsilon_p$ in 
$$\epsilon_pz^{(p-1)/2}G_p(1/z)+G_p(z)\is 2\gamma F_p(z)\mod{p}$$
and we obtain that $G_p(\epsilon_p)=\gamma F_p(\epsilon_p)$,
the key step in the proof of our theorem.

\section{Hypergeometric motives}\label{motive}
In this section we explain the nature of the unit root $f_p(z_0)$ via 
finite hypergeometric sums and their $\zeta$-functions. For any $q=p^k$
we consider a generator $\omega$ of the multiplicative characters on
$\bbbf_q^{\times}$. Then we define the Gauss-sum
$$g_q(\omega^k)=\sum_{x\in\bbbf_q^{\times}}\omega(x)^k\zeta_p^{\tr(x)},$$
where $\tr:\bbbf_q\to\bbbf_p=\bbbz/p\bbbz$ is the trace map 
and $\zeta_p$ is a primitive $p$-th root of unity.
Let $\phi$ be the unique character of
order $2$. Let $t\in\bbbf_q^{\times}$ and define
$$H_q(t)={(-1)^d\over1-q}\sum_{m=0}^{q-2}\left({g_q(\phi\omega^m)g_q(\omega^{-m})
\over g_q(\phi)}\right)^d\omega((-1)^dt)^m.$$
It turns out that the values are rational
integers which are independent of the choice of $\omega$ and $\zeta_p$.
Such functions were introduced by John Greene and independently Nick Katz
by the end of the 1980's. According to Katz these sums are traces of
the Frobenius operator on $l$-adic cohomology associated to the hypergeometric
differential equation. More concretely, hypergeometric sums show up in
point counting results on algebraic varieties over finite fields. The relevant
example for us is the following.

\begin{theorem}
Let $q$ be an odd prime power, $t\in\bbbf_q^{\times}$ and $d\ge2$ an integer.
Then the number of points with coordinates in $\bbbf_q^{\times}$ on
the hypersurface
$$X_t:\ \prod_{i=1}^d(x_i+2+x_i^{-1})=4^dt^{-1}$$
is given by
$$
{(q-2)^d-(-1)^d\over q-1}-(-1)^d H_q(t).
$$
\end{theorem}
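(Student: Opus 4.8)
The plan is to count the points of $X_t$ fibrewise over the coordinatewise map $x_i\mapsto y_i:=x_i+2+x_i^{-1}$. For a fixed $y\in\bbbf_q$, the solutions $x\in\bbbf_q^\times$ of $x+2+x^{-1}=y$ are exactly the roots of $x^2+(2-y)x+1$; since the constant term is $1$, both roots automatically lie in $\bbbf_q^\times$, and the discriminant is $y(y-4)$. With the convention $\phi(0)=0$ the number of such $x$ is therefore $1+\phi\bigl(y(y-4)\bigr)$ in all cases. Writing $c=4^dt^{-1}$, which is nonzero because $t\ne0$, the point count becomes
$$
N=\sum_{\substack{y_1,\dots,y_d\in\bbbf_q^\times\\ y_1\cdots y_d=c}}\ \prod_{i=1}^d\bigl(1+\phi(y_i)\phi(y_i-4)\bigr).
$$

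Next I would detect the multiplicative constraint $y_1\cdots y_d=c$ by orthogonality of the characters of $\bbbf_q^\times$, that is, replace the constraint by $\frac{1}{q-1}\sum_\chi\chi\bigl(y_1\cdots y_d\,c^{-1}\bigr)$, where $\chi$ runs over all $q-1$ multiplicative characters (equivalently the $\omega^m$ of the definition of $H_q$). The resulting sum over $\mathbf{y}$ then factors completely into one factor per coordinate, giving
$$
N=\frac{1}{q-1}\sum_\chi \chi(c)^{-1}A(\chi)^d,\qquad
A(\chi)=\sum_{y\in\bbbf_q^\times}\chi(y)\bigl(1+\phi(y)\phi(y-4)\bigr).
$$

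The heart of the matter is the evaluation of $A(\chi)$. After the substitution $y=4u$ (legitimate since $p$ is odd, so $\phi(4)=1$) the nontrivial part of $A(\chi)$ turns into a Jacobi sum $J(\chi\phi,\phi)$, which for $\chi\ne\varepsilon,\phi$ equals $g_q(\chi\phi)g_q(\phi)/g_q(\chi)$. Using the standard identities $g_q(\chi)g_q(\chi^{-1})=\chi(-1)q$ and $g_q(\phi)^2=\phi(-1)q$, I would rewrite this as $A(\chi)=\chi(-4)\,g_q(\chi\phi)g_q(\chi^{-1})/g_q(\phi)$, so that for generic $\chi$,
$$
\chi(c)^{-1}A(\chi)^d=\chi\bigl((-1)^dt\bigr)\left(\frac{g_q(\chi\phi)g_q(\chi^{-1})}{g_q(\phi)}\right)^{\!d}=:B(\chi).
$$
Comparing with the definition of $H_q(t)$ shows $\frac{1}{q-1}\sum_\chi B(\chi)=-(-1)^dH_q(t)$. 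I expect the main obstacle to be precisely this bookkeeping: one must track the factors $\chi(4)^{\pm d}$, $\chi(-1)$, and the powers of $g_q(\phi)$ and arrange that they collapse exactly into the clean exponent $\chi((-1)^dt)$ and the normalisation $1/(1-q)$ appearing in $H_q$, getting the signs right throughout.

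Finally I would account for the two degenerate characters, for which $\chi(c)^{-1}A(\chi)^d$ differs from $B(\chi)$, so that $N+(-1)^dH_q(t)=\frac{1}{q-1}\sum_{\chi\in\{\varepsilon,\phi\}}\bigl(\chi(c)^{-1}A(\chi)^d-B(\chi)\bigr)$. For $\chi=\varepsilon$ one finds $A(\varepsilon)=q-2$, using the elementary evaluation $\sum_{y}\phi\bigl(y(y-4)\bigr)=-1$, whereas the generic formula would predict $g_q(\varepsilon)^d=(-1)^d$; their difference is $(q-2)^d-(-1)^d$. For $\chi=\phi$ a direct computation gives $A(\phi)=-\phi(-1)$, and one checks that $\phi(c)^{-1}A(\phi)^d$ coincides with $B(\phi)$, so this character contributes nothing. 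Collecting the two corrections yields
$$
N=\frac{(q-2)^d-(-1)^d}{q-1}-(-1)^dH_q(t),
$$
which is the asserted formula.
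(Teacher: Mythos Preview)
Your argument is correct and self-contained. The paper takes a different route: it invokes \cite[Thm~6.1]{BCM}, which gives in one stroke the point count of the projective hypersurface $\lambda\prod_i(v_i+w_i)^2=\prod_i v_iw_i$ as a sum of products of Gauss sums; the authors then reduce to the affine model $X_t$ via $x_i=v_i/w_i$ and rewrite the resulting $g_q(\omega^{2m})^d g_q(\omega^{-m})^{2d}$ terms into the $H_q$-shape using the Hasse--Davenport product relation $g_q(\omega^{2m})=\omega(4)^m g_q(\omega^m)g_q(\phi\omega^m)/g_q(\phi)$ together with $g_q(\omega^m)g_q(\omega^{-m})=(-1)^m q$. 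Your approach replaces both of these ingredients by the single elementary observation that the fibre $x+2+x^{-1}=y$ has $1+\phi(y(y-4))$ points; the constraint $\prod y_i=c$ is then detected by orthogonality, and the one-variable sum $A(\chi)$ becomes a Jacobi sum which you convert directly to the desired Gauss-sum ratio. This is more hands-on than the paper's proof and avoids the external reference; the price is the bookkeeping you flag (the $\chi(-4)$, $\phi(-1)$, and $g_q(\phi)^2$ factors) and the separate treatment of $\chi=\varepsilon,\phi$, which you handle correctly: $A(\varepsilon)=q-2$ via $J(\phi,\phi)=-\phi(-1)$, and $A(\phi)=-\phi(-1)$ matches $B(\phi)$ on the nose, so only the $\varepsilon$-term contributes the displayed rational correction.
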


\begin{proof}
This is a consequence of Theorem \cite[Thm 6.1]{BCM}. Since our hypergeometric
parameters are just $1/2$ and $1$ we are in a special situation where the
parameters $a_i$ from \cite[Thm 6.1]{BCM} read $(2,\ldots,2,-1,\ldots,-1)$
with $d$ repetitions of $2$ and $2d$ repetitions of $-1$. The corresponding
variety is given by the intersection of the following varieties
in $(\bbbp^2)^d$,
$$u_1+v_1+w_1=u_2+v_2+w_2=\cdots =u_d+v_d+w_d=0,\quad \lambda \prod_{i=1}^d
u_i^2=\prod_{i=1}^dv_iw_i.$$
Elimination of the $u_i$ gives us $\lambda\prod_{i=1}^d(v_i+w_i)^2=
\prod_{i=1}^dv_iw_i$. Then set $x_i=v_i/w_i$ and $\lambda=t/4^d$
to get the equation of our assertion.
Theorem \cite[Thm 6.1]{BCM} gives the point count with invertible
coordinates in $\bbbf_q$ as
$${(q-2)^d\over q-1}+{1\over q^d(q-1)}\sum_{m=1}^{q-2}g_q(\omega^{2m})^d g_q(\omega^{-m})^{2d}
\omega(\lambda)^m.$$
Use the Hasse-Davenport relation $g_q(2m)=\omega(4)^m g_q(\omega^m)g_q(\phi\omega^m)/g_q(\phi)$
and $g_q(m)g_q(-m)=(-1)^mq$ to get
\begin{eqnarray*}
&&{(q-2)^d\over q-1}+{1\over q-1}\sum_{m=1}^{q-2}\left({g_q(\phi\omega^m)g_q(\omega^{-m})
\over g_q(\phi)}\right)^d
\omega((-4)^d\lambda)^m\\
&=&{(q-2)^d-(-1)^d\over q-1}-(-1)^d H_q(4^d\lambda)
\end{eqnarray*}
We find our desired point count after replacing $\lambda$ by $t/4^d$. 
\end{proof}

We now compute $\zeta$-function associated to the values of $H_q(t)$
(with $t\in\bbbf_p^{\times}$) in the
usual way,
$$Z_p(t,T)=\exp\left({H_{p^s}(t)\over s}T^s\right),$$
which turns out to be a polynomial in $\bbbz[T]$ of degree $d$ when $t\ne1$.
When $t=1$ and $d$ odd the degree is $d-1$, when $t=1$ and $d$ even $Z_p(1,T)$
is a polynomial of degree $d-2$ divided by a factor $1-p^{-1+d/2}T$. 
We shall simply take the $d-2$-degree polynomial for $Z_p(1,T)$ in this case.
 
Here we are not able to prove all this, but we simply mention some folklore
results and conjectures which make up a large body of a project on hypergeometric
motives by F.Rodriguez-Villegas, D.Roberts and M.Watkins. The latter has implemented
the computations in Magma. This is now an impressive library to compute the 
polynomials $Z_p(T)$, and also to manipulate the global $L$-series that contain
the $Z_p(p^{-s})$ as local Euler factors. In addition K.Kedlaya has recently 
announced a Sage-implementation (largely a port of the Magma-implementation)
which also calculates the $Z_p(T)$ for us.

We use some of these calculations to illustrate the background to
the supercongruences and the origin of the unit-root $f_p(z_0)$. The polynomial
$Z_p(t,T)$ can be factored as $\prod_i(1-\mu_iT)$ where the $\mu_i$ are algebraic
and all have the same absolute value $p^{(d-1)/2}$ according to the Weil-conjectures.
The exponent $d-1$ is called the weight of the $\zeta$-factor $Z_p(t,T)$. 
By abuse of language we shall call the $\mu_i$ the zeros of $Z_p(t,T)$. 
The idea is now that if $f_p(z_0)$ is a $p$-adic unit, the polynomial
$Z_p(z_0,T)$ has a unique $p$-adic zero which is a unit, namely $f_p(z_0)$. 
Here are some examples.
\medskip

When $d=4$ and $z_0=1$ we get $Z_p(1,T)=1-a_pT+p^3T^2$ where $a_p$ is the $p$-th
coefficient of $\eta(2\tau)^4\eta(4\tau)^4$. It is clear that when this polynomial
has a unit root $f_p(1)$, the Newton polygon has $p$-adic slopes $0,3$.
Hence $f_p(1)\is a_p\mod{p^3}$. The missing slopes $1,2$ may account for the
occurrence of a supercongruence mod $p^3$.
\medskip

When $d=6$ and $z_0=1$ we get $Z_p(1,T)=(1-pa_pT+p^5T^2)(1-b_pT+p^5T^2)$, 
where $a_p$ is as above and $b_p$ the $p$-th coefficients of the newform
in $S_6(8,\chi_0)$. The Newton slopes of the first one are $1,4$ (if $a_p$
is a unit) and $0,5$ for the second (if $b_p$ is a unit). This shows that
$f_p(1)\is b_p\mod{p^5}$ and one might also consider this as an explanation
for the conjectural supercongruence modulo $p^5$. 
\medskip

In general, when $d$ is even and $z_0=1$, we expect a factorization
$Z_p(1,T)=U_p(T)V_p(T)$ into two factors in $\bbbz[T]$. The degrees 
of $U_p,V_p$ are $-1+d/2,-1+d/2$ when $d=2\mod{4}$ and $-2+d/2,d/2$
if $d\is0\mod{4}$. The factor $U_p$ has one Newton slope $1$ and the others higher. 
The factor $V_p$, when $f_p(1)$ is a unit, has Newton slopes $0,2$ and higher.
So, in a way the factorization of $Z_p(1,T)$ separates the slope $1$ from the
slopes $0,2,\ldots$. Naturally $f_p(1)$ is the unit root zero of $V_p$.
The separation of the slopes may be seen as an explanation of the supercongruences
from Theorem \ref{main}. Speculations of this type were first made by
Dave Roberts and Fernando Rodriguez-Villegas in their preprint
\cite{RobVillegas}. Instead of speaking about Newton slopes they consider
Hodge levels in the cohomology of a hypergeometric motive. 

Finally we record a few factorizations of $Z_p(-1,T)$ when $d$ is odd. This
is a case where factorizations are abundant.
\medskip

When $d=3$ we get 
$$Z_p(-1,T)=(1-pT)(1-c_pT+\chi(-8)p^2T^2).$$
Here $c_p$ is the $p$-th coefficient of the modular form
$\eta(\tau)^2\eta(2\tau)\eta(4\tau)\eta(8\tau)^2$ and is
related to the case $d=3$ in Conjecture \ref{conj3}.
\medskip

When $d=5$ we get 
$$Z(-1,T)=(1-\gamma_p p^2T)(1-pc_pT+p^4T^2)(1-d_pT+p^4T^2),$$
where $d_p$ is the coefficient defined in Conjecture \ref{conj3} and
$c_p$ the $p$-th coefficient of $\eta(4\tau)^6$. The coefficient $\gamma_p$
is $-1$ if $p\is5\mod{8}$ and $1$ otherwise. 
\medskip

When $d=7$ we get
$$Z_p(-1,T)=(1-p^3T)(1-pa_pT+p^6T^2)Q_4(T),$$
where $Q_4$ is a factor of degree $4$.
Here $a_p=\leg{-4}{p}(\phi_p^2-2p^2)$ where $\phi_p$ is the $p$-th coefficient of the form
in $S_3(32,\chi(-4))$ that begins with
$$q+4iq^3+2q^5-8iq^7-7q^9-4iq^{11}-14q^{13}+8iq^{15}+18q^{17}-12iq^{19}+32q^{21}
+40iq^{23}+\cdots$$

Moreover, when $p\is3,5\mod{8}$ the polynomial $Q_4$ factors into $1-p^6T^2$
times a quadratic factor $1-\gamma_pT+p^6T^2$. However, this does not give us anything
stronger than mod $p^2$ congruences. We are indebted to Dave Roberts for the
identification of the modular form.

\end{document}